\title{Free Fourier Multipliers  associated with the first Segment}
\author{Tao Mei and Quanhua Xu}
  \newcommand{\N}{\ensuremath{\mathbf{N}}}%
  \newcommand{\M}  {\cal M}%
  \newcommand{\F}{\ensuremath{\Bbb{F}}}%
 \newcommand{\la}{\lambda}
\newtheorem{thm}{Theorem}
\newtheorem{lemma}[thm]{Lemma}
\newtheorem{corollary}[thm]{Corollary}
\begin{document}


\maketitle
\begin{abstract} We study  Fourier multipliers on   free group $\F_\infty$  associated with the first segment of the reduced words, and prove that they are  completely bounded  on the noncommutative $L^p$ spaces  $L^p(\hat\F_\infty)$ iff   
their restriction on $L^p(\hat\F_1)=L^p({\Bbb T})$ are completely bounded.
As a consequence, we get an analogue of the classical Mikhlin multiplier theorem  for this class of Fourier multipliers on free groups. 
\end{abstract}
\section*{Notation}

$\F_n:$ free group of rank $n\in \N\cup \{\infty\}$ with given generators $g_k$'s.\\
$\la_g$: the  left translation operator on $\ell_2(\F_n)$ sending $\delta_h$ to $\delta_{gh}$.\\
${\cal L}(\F_n)$: the group von Neumann algebra is the weak $*$-closure of the space of linear combinations of $\la_g$ in $B(\ell_2(\F_n))$.\\
$\tau$: the canonical trace on ${\cal L}(\F_n)$ is the linear functional such that $\tau(\la_e)=1$ and $\tau(\la_h)=0$ if $h\neq e$.\\
Set $L^\infty(\hat \F_n)={\cal L}(\F_n)$ by convention.\\
$L^p(\hat \F_n)$,  $1\leq p<\infty$ : the non commutative $L^p$ space  is the completion of ${\cal L}(\F_n)$ 
with respect to the norm $(\tau|x|^p)^\frac1p$.\\
For a reduced word $h=g_{i_1}^{k_1}g_{i_2}^{k_2}\cdots g_{i_m}^{k_m}$, we denote by $ \|h\|$ the block length $m$.
Let ${\cal L}_0=\{e\}$. Denote by ${\cal L}_{k^\pm}$ the set of all reduced words $h$ that starts with a power of $g_k$, i.e.  $h=g_{i_1}^{k_1}g_{i_2}^{k_2}\cdots g_{i_m}^{k_m}$ with $i_1=k$.\\
Let $L_{k^\pm}$ be the projection on to $\la(L_k)$ in $L_2(\hat\F_2)$.\\
Let $e_{kj}$ be the canonical basis of $B(\ell_2)$.\\
Let $L^p(\ell_{cr}^2), 2\leq p\leq \infty$ be the space of operator valued sequences $x=(x_k)_k $ such that $$\|x\|_{L^p(\ell_{cr}^2)}=\max\{\|(\sum_k|x_k|^2)^\frac12\|_p,\|(\sum_k|x^*_k|^2)^\frac12\|_p\}<\infty.$$

\section{Introduction}
 
 The Fourier multiplier operators  have been a central object in analysis. Their boundedness on   $L^p$-spaces of ${\Bbb R}^n$    has been extensively studied. 
The so-called Mikhlin multiplier theorem  says that, if $m$ is a smooth function on ${\Bbb R}^n$  such that  $$\sup_{0\leq |j|\leq \frac n2+1} |\xi^j\nabla^j m(\xi)|<C$$ for all $\xi\neq0$, then the multiplier operator $$T_m: e^{i\xi x}\mapsto m(\xi )e^{i\xi x}$$ extends to a 
bounded operator on $L^p({\Bbb R}^n)$ (resp. $L^p({\Bbb T}^n)$) for all $1<p<\infty$. This result was originally proved by S. Mikhlin ([M56], [M65]) and is now  a fundamental theorem  in the  Calder\'on-Zygmund-Stein Singular integral theory. 

Murray and von Neumann's work ([MvN36]) demonstrates   von Neumann algebras  as  a  natural framework to do  noncommutative   analysis. The elements in a von Neumann algerba $\M$ can be ``integrated over" the equipped trace $\tau$ and ``measured" by the associated $L^p$-norms. 
 For a (nonabelian) discrete group $\Gamma$, the von Neumann algebra is the closure of the linear span of left regular representation $\la_g$'s w.r.t. a weak operator topology. The trace $\tau$ is simply defined as 
 $$\tau x=c_e,$$
 for $x=\sum_g c_g\la_g$.
 The associated $L^p$ norm is defined as $$\|x\|_p=(\tau |x|^p)^\frac1p$$ for   $1\leq p<\infty$. When $p=\infty$, the $L^p$ space is set to be  the von Neumann algebra itself. 
 When $\Gamma={\Bbb Z}$, the obtained $L^p$ space is the $L^p$ space on the unit circle ${\Bbb T}=\hat {\Bbb Z}$.
 
 
 The theory of noncommutative $L^p$-spaces was laid out in the work of Segal ([Seg53]) and Dixmier ([Dix53]). 
 Fourier multipliers on noncommutative $L^p$-spaces have been used as fundamental tools in operator algebras theory, noncommutative geometry, and mathematical physics, 
and have grown up to a new studying-object  in functional analysis with its own interest. 
  Fourier multipliers on nonabelian groups  $\Gamma$  are  linear maps $M$ on the left regular representation of $\Gamma$ such that
 $$M:\lambda_g\mapsto m(g)\lambda_g$$ 
 with $m$ a scalar-valued bounded function on $\Gamma$. The boundedness of   $M$ is tested  on   the associated noncommutative $L^p$ spaces.     
 
 The study of Fourier multipliers on free groups  has a long history. Please refer to the works of Bo\.zejko,  Fig\'a-Talamanca/Picardello, Haagerup, Pytlik-Szwarc, Junge/Le Merdy/Xu etc. These works usually rely on the theory of positive definite functions and restrict the study on the radial multipliers. 
 In  [MR17], Mei and Ricard 
 studied an important non-radial multiplier, the so-called free Hilbert transform, and answered a  question of P. Biane and G. Pisier about its  $L^p$ boundedness on free groups.
 
  The key of Mei-Ricard's argument is a free analogue of the classical
  Cotlar's formular. This type of formula fits Hilbert transform  type multipliers but does not work for general Fourier multipliers. This raises the question whether  Mei-Ricard's result is a lucky case or there is  a more general rule on the $L^p$-boundedness of Fourier multipliers.
   In this note, we   study the class of Fourier multipliers on  the free group  associated with the first segment of reduced words, and give a general rule for their complete-$L^p$-boundedness. 
  


\section{A Translation Group}

Given   $z=(z_1,\cdots, z_n)$, a sequence of complex numbers with modular $1$, we  use $T_z,T_z^\circ$ to denote the linear maps  on $L^2 (\hat\F_n)$ such that  $$T_z(\la_e)=\la_e=T^\circ_z(\la_e)$$ and 
$$T_z(\la_h)= z_{i_1}^{k_1}\la_h,\ \ \ T_z^\circ(\la_h)= z_{i_m}^{k_m} \la_h$$ 
for $$h=g_{i_1}^{k_1}g_{i_2}^{k_2}\cdots g_{i_m}^{k_m}.$$ 
Note we have that $$[T_z(\la_h)]^*=T_z^\circ \la_{h^{-1}}.$$
We will prove that $T_z$ is a uniformly bounded group of operators on $L^p(\hat \F_n)$ for all $1<p<\infty$. Therefore, an analogue of the classical Mikhlin's multiplier theorem follows by Coifman/Weiss/Zygmund's transference principle. 

Denote by $\pi_z$ the $*$-homemorphism on ${\cal L}({\F}_n)$ that sends $\la_{g_i}$ to $z_i\la_{g_i}$. Let $P_1$ be the projection onto the subspace of $L^2(\hat\F_n)$ spanned by reduced words  with block length $\leq1$. We see that $$P_1\pi_z=P_1T_z=P_1T_z^\circ, P_1\pi_{z^2}=P_1T_zT_z^\circ,$$ so
\begin{eqnarray}\label{piz}
 \|P_1T_z\|=\|P_1T^\circ_z T_z\|=\|P_1\|\leq 3.
\end{eqnarray}

The following Lemma is from [MR17], Corollary 3.10. One can check the proofs there and find   the upper bound $p^2$ for $p>2$.
\begin{lemma} For $x\in L^p(\hat\F_n)$, we have
\begin{eqnarray}
(\sqrt 2 p^2)^{-1}\|x\|_p\leq \max\{\|\sum_k e_{k1}\otimes L_{k^\pm} x\|_p,\|\sum_k e_{1k}\otimes L_{k^\pm} x\|_p\}\leq  p^2 \|x\|_p.
\end{eqnarray}
for all $2<p<\infty$.
\end{lemma}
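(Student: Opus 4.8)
The plan is to recognise the middle quantity of the Lemma as the row‑and‑column square function of the ``first letter'' decomposition of $x$, to obtain the two inequalities from the Khintchine/Rosenthal‑type inequality for such decompositions in a reduced free product (the technical core of [MR17]), and then to follow the numerical constants through that argument, which is what [MR17, Corollary~3.10] records.

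First I would write $x=\sum_{k\ge 0}d_k$ with $d_0=\tau(x)\la_e$ and $d_k=L_{k^\pm}x$ for $k\ge 1$. Since the ranges of the $L_{k^\pm}$ together with $\C\la_e$ are mutually orthogonal and span $L^2(\hat\F_n)$, this is a genuine orthogonal decomposition, and the routine matrix‑unit computation ($|\sum_k e_{k1}\otimes d_k|^2=e_{11}\otimes\sum_k d_k^{*}d_k$ and $(\sum_k e_{1k}\otimes d_k)(\sum_k e_{1k}\otimes d_k)^{*}=e_{11}\otimes\sum_k d_k d_k^{*}$) identifies the two terms inside the maximum as the column norm $\|(\sum_k|d_k|^2)^{1/2}\|_p$ and the row norm $\|(\sum_k|d_k^{*}|^2)^{1/2}\|_p$; thus the middle quantity is exactly $\|(d_k)_k\|_{L^p(\ell_{cr}^2)}$.

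Next I would use, for each fixed $k$, the free product splitting ${\cal L}(\F_n)={\cal L}(\langle g_k\rangle)*{\cal L}(\langle g_l:l\ne k\rangle)$. Every $h\in{\cal L}_{k^\pm}$ factors uniquely as $h=g_k^{\,j}w$ with $j\ne 0$ and $w$ a reduced word not beginning with a power of $g_k$, and no cancellation occurs in $g_k^{\,j}w$; writing $d_k=\sum_{j\ne 0}\la_{g_k}^{\,j}y_{j}$ accordingly and amplifying by $B(\ell_2)$ one gets $d_k\otimes e_{00}=\alpha_k\beta_k$, where $\alpha_k=\sum_{j\ne 0}\la_{g_k}^{\,j}\otimes e_{0j}$ lies in the mean‑zero part of the $k$‑th free factor (amplified) and $\beta_k=\sum_{j\ne 0}y_{j}\otimes e_{j0}$ is supported on words not beginning with a power of $g_k$. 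Thus, up to this ampliation, $\sum_k d_k$ is an element of the mean‑zero part of a reduced free product whose first‑letter pieces are precisely the $d_k$, so the free‑product Khintchine/Rosenthal inequality used in [MR17] applies and, for $2<p<\infty$, yields a two‑sided comparison of $\|x\|_p$ with $\max\{\|(\sum_k|d_k|^2)^{1/2}\|_p,\ \|(\sum_k|d_k^{*}|^2)^{1/2}\|_p,\ (\sum_k\|d_k\|_p^{p})^{1/p}\}$. In the range $p>2$ the diagonal $\ell^p$ term is redundant, since $|d_k|^2\le\sum_j|d_j|^2$ forces $\|d_k\|_p\le\|(\sum_j|d_j|^2)^{1/2}\|_p$ for every $k$ and hence the same bound for the $\ell^p$ sum; the comparison then collapses to the two inequalities of the Lemma.

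The only genuinely delicate step, and the one the statement is really about, is the size of the constants. This means going back through the proof of the free‑product Khintchine inequality: the $\sqrt p$‑type estimate at its base, the interpolation producing the Rosenthal form, and the extra ampliation handling the internal ${\Bbb T}$‑structure of each $\langle g_k\rangle$ (the sum over the powers $j$). The main obstacle to keep in mind is that the $d_k$ do \emph{not} sit in mutually free subalgebras — each may involve $g_1$ deep inside its reduced words — so one cannot invoke a bare free Rosenthal inequality and must use the first‑letter refinement throughout, a chain of interpolations in which constants are easily mislaid. Carried out with care, the two layers (one for the ${\Bbb T}$‑direction inside a generator, one for the free position across generators) each cost a factor of order $p$, whose product gives $p^2$ on the upper side for $p>2$ and $(\sqrt 2\,p^2)^{-1}$ on the lower side; this is exactly what inspecting the proof of [MR17, Corollary~3.10] yields.
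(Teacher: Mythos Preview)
The paper gives no proof of this lemma; it cites [MR17, Corollary~3.10] and notes that inspecting that proof yields the constant $p^2$ for $p>2$. So there is no in-paper argument to compare your proposal against --- what you have written is an attempted reconstruction of the external source.

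Your identification of the middle quantity with $\|(d_k)_k\|_{L^p(\ell_{cr}^2)}$ is correct, but the sketch has a genuine gap. You invoke a three-term Rosenthal inequality and then dismiss the diagonal term $(\sum_k\|d_k\|_p^p)^{1/p}$ on the grounds that $|d_k|^2\le\sum_j|d_j|^2$ gives $\|d_k\|_p\le\|(\sum_j|d_j|^2)^{1/2}\|_p$; that bounds each \emph{single} $\|d_k\|_p$, not their $\ell^p$-sum, so the redundancy does not follow. The ampliation step is also not doing what you claim: writing $d_k\otimes e_{00}=\alpha_k\beta_k$ with the $\alpha_k$ free does not make $\sum_k\alpha_k\beta_k$ a sum of free mean-zero elements, since the right factors $\beta_k$ depend on $k$ and sit in the common algebra, so neither the scalar-valued free Khintchine of [RX06] nor a bare Rosenthal inequality applies to the first-letter pieces $d_k$. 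The argument in [MR17] in fact runs the other way round: one first proves, by a Cotlar-type identity and a bootstrap from $L^p$ to $L^{2p}$ (exactly the template recycled in the proof of Theorem~\ref{key} here), that the sign multipliers $\sum_k\varepsilon_kL_{k^\pm}$ are uniformly completely bounded on $L^p$ with norm of order $p^2$; Corollary~3.10 then follows by averaging over the signs $\varepsilon_k$ and applying the noncommutative Khintchine inequality, and no diagonal term ever enters.
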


\begin{lemma}\label{prop}  For $g,h\in \F_\infty, g\in {\cal L}_{k^\pm},h^{-1}\in {\cal L}_{j^\pm}, k,j\geq0$    we have that\\
 (i) if the block length $\|gh\|\leq1$ and $k=j$,
 \begin{eqnarray} 
 T_z(\la_{g })T_z^\circ(\la_h)= T_z(\la_{g} \la_h)=T_z^\circ(\la_g\la_h); \label{ghid1}
 \end{eqnarray}
(ii) otherwise,
 \begin{eqnarray} T_z(\la_{g })T_z^\circ(\la_h)=T_z(\la_{g } T_z^\circ(\la_h))+T_z^\circ(T_z(\la_{g })\la_h)-T^\circ_z T_z(\la_{g h})\hskip-.1in\label{ghid} 
 \end{eqnarray}
\end{lemma}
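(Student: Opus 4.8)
The plan is to argue by cases on the structure of the reduced words $g$ and $h$, using the definitions of $T_z$ and $T_z^\circ$ and the multiplication rule for reduced words in a free group. Write $g=g_{i_1}^{k_1}\cdots g_{i_m}^{k_m}$ (so the first block letter is $g_k$ when $g\in\mathcal L_{k^\pm}$) and $h=g_{j_1}^{l_1}\cdots g_{j_s}^{l_s}$; then $h^{-1}\in\mathcal L_{j^\pm}$ means the \emph{last} block of $h$ is a power of $g_j$. The product $gh$ in $\F_\infty$ is obtained by cancelling a (possibly empty) common piece between the tail of $g$ and the head of $h$, and possibly amalgamating the surviving boundary blocks if they are powers of the same generator. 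The key point is to track what happens to the \emph{first block} of $gh$ (which governs $T_z(\la_{gh})$) and to the \emph{last block} of $gh$ (which governs $T_z^\circ(\la_{gh})$) in terms of the first block of $g$ and the last block of $h$.

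For part (i): the hypothesis $\|gh\|\le 1$ together with $g\in\mathcal L_{k^\pm}$, $h^{-1}\in\mathcal L_{j^\pm}$ and $k=j$ forces almost total cancellation, so that $gh$ is a single block which is a power of $g_k=g_j$ (or $gh=e$, the case $k=j=0$ where $g=h^{-1}$). In this situation the first block and last block of $gh$ coincide and are the unique surviving block, so $T_z(\la_{gh})=T_z^\circ(\la_{gh})=z_k^{r}\la_{gh}$ where $z_k^r$ is the scalar attached to that block. On the other hand $T_z(\la_g)$ multiplies by $z_k^{k_1}$ (the scalar of the first block of $g$) and $T_z^\circ(\la_h)$ multiplies by $z_j^{l_s}$ (the scalar of the last block of $h$); I would check that the exponents are additive under the cancellation/amalgamation so that $z_k^{k_1}z_j^{l_s}=z_k^{r}$, giving \eqref{ghid1}. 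The case $g=h^{-1}$ is immediate since both sides are $\la_e$.

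For part (ii), the identity \eqref{ghid} is an algebraic identity between four explicit scalar multiples of $\la_{gh}$, so it suffices to check that the scalar coefficients match. Writing $\alpha=z_k^{k_1}$ for the first-block scalar of $g$, $\beta=z_j^{l_s}$ for the last-block scalar of $h$, $\gamma$ for the first-block scalar of $gh$, and $\delta$ for the last-block scalar of $gh$, the left side is $\alpha\beta\,\la_{gh}$ while the right side is $(\alpha\delta+\gamma\beta-\gamma\delta)\la_{gh}$; so the claim reduces to $\alpha\beta=\alpha\delta+\gamma\beta-\gamma\delta$, i.e. $(\alpha-\gamma)(\beta-\delta)=0$. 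Thus the real content is: in the ``otherwise'' case, either the first block of $gh$ equals the first block of $g$ (so $\gamma=\alpha$), or the last block of $gh$ equals the last block of $h$ (so $\delta=\beta$). This is where I expect the main work. I would prove it by observing that the cancellation in $gh$ eats into $g$ from the right and into $h$ from the left; the first block of $g$ is preserved unless the cancellation reaches all the way through $g$ (so $\|g\|$ blocks are consumed, which forces $\|gh\|$ small and, combined with $\|gh\|\le 1$, $k=j$ — the excluded case), and symmetrically the last block of $h$ is preserved unless cancellation reaches through all of $h$. The only scenario in which \emph{both} the first block of $g$ and the last block of $h$ fail to survive is precisely full cancellation on both ends, which (after a short analysis of the amalgamation of the two surviving middle fragments) lands exactly in the hypothesis of part (i); hence in the complementary ``otherwise'' case at least one of $\gamma=\alpha$, $\delta=\beta$ holds, and \eqref{ghid} follows. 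I would also note the degenerate sub-cases — $g=e$ or $h=e$ (then $\alpha=\gamma$ or $\beta=\delta$ trivially), and $k=0$ or $j=0$ meaning the relevant one of $g,h$ is itself $e$ — are absorbed into this dichotomy.
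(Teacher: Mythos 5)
Your proof is correct and essentially the same as the paper's: part (i) is the additivity of exponents in the single surviving block, and your reduction of (ii) to the scalar identity $(\alpha-\gamma)(\beta-\delta)=0$ is precisely the paper's alternative that either $T_z(\la_g\la_h)=T_z(\la_g)\la_h$ or $T^\circ_z(\la_g\la_h)=\la_g T^\circ_z(\la_h)$, i.e. (\ref{ei}) or (\ref{ther}), from which (\ref{ghid}) follows by the same bookkeeping. You even sketch the cancellation argument for this dichotomy, which the paper merely asserts; your parenthetical claim that losing the first block of $g$ forces $\|gh\|$ to be small is not accurate (the last block of $h$ may simply survive), but your concluding point --- that both ends can be destroyed only in the fully cancelling situation of part (i) --- is the correct and sufficient one.
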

{\it Proof.}    In case (i),  suppose $ T_z(\la_{g })=z_k^s\la_g$ and $T_z^o(\la_h)=z_k^l\la_h$, we have $T_z(\la_{g }\la_h)=T^\circ_z(\la_{g h})=z_k^{s+l}\la_{gh}=T_z (\la_g)T^\circ_z(\la_ h)$ since $gh$ has block length $1$. We get (\ref{ghid1}).

In case (ii), we have either  the identity \begin{eqnarray}\label{ei}
T_z(\la_{g}\la_h) =T_z(\la_{g })\la_h
\end{eqnarray}
 or 
 \begin{eqnarray}\label{ther}
 T^\circ_z(\la_{g }\la_h)=  \la_{g }T_z^\circ(\la_h).
 \end{eqnarray}  Assuming (\ref{ei}), we must have    $$T_z(\la_{g }T_z^\circ(\la_h))=T_z(\la_{g })T_z^\circ(\la_h),$$
because $T_z^\circ(\la_h)$ is merely a multiplication of $\la_h$ by a constant.
 We then get (\ref{ghid}).
   Assuming (\ref{ther}),  we have $$T^\circ_z(T_z(\la_{g })\la_h)= T_z( \la_{g })T_z^\circ(\la_h),$$
   because $T_z(\la_g)$ is merely a multiplication of $\la_g$ by a constant.
We get (\ref{ghid}) again.

 \begin{thm}\label{key}
 For  $1<p<\infty$, we have
\begin{eqnarray}\label{key1}  \|T_z  x\|_{L^{p }}\simeq^{ c_{p}}  \| x\|_{L^{p} },\end{eqnarray}
for any $x\in L^p(\hat\F_\infty)$. 
\end{thm}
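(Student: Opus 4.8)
The plan is to derive \eqref{key1} from a one-sided estimate together with duality, and to prove the one-sided estimate from a free Cotlar-type identity built out of Lemma \ref{prop}. First, $T_z$ is invertible on each $L^p(\hat\F_\infty)$ with $T_z^{-1}=T_{\bar z}$, where $\bar z=(\bar z_1,\bar z_2,\dots)$ is again unimodular; hence it is enough to prove $\|T_zx\|_p\le c_p\|x\|_p$ uniformly over $z$, since applying this to $\bar z$ and substituting $x=T_{\bar z}y$ gives $\|y\|_p\le c_p\|T_zy\|_p$. Moreover, for the trace duality $\langle a,b\rangle=\tau(ab)$ one computes $(T_z)^*=T_{\bar z}^\circ$, and from $[T_z(\la_h)]^*=T_z^\circ(\la_{h^{-1}})$ one gets $T_z^\circ=(\,\cdot\,)^*\circ T_z\circ(\,\cdot\,)^*$; since the $*$-operation is an isometry on every $L^p$, this yields $\|T_z\|_{p\to p}=\|T_{\bar z}\|_{p'\to p'}$, so it suffices to treat $2\le p<\infty$. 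For $p=2$ the assertion is clear because $T_z$ is unitary on $L^2(\hat\F_\infty)$; so fix $2<p<\infty$.

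By the Lemma of [MR17] recalled above, $\|T_zx\|_p$ is comparable, with constants of order $p^2$, to $\max\bigl\{\,\|\sum_ke_{k1}\otimes L_{k^\pm}T_zx\|_p,\ \|\sum_ke_{1k}\otimes L_{k^\pm}T_zx\|_p\,\bigr\}$, and similarly for $\|x\|_p$ with $T_zx$ replaced by $x$. Since $T_z$ preserves each subspace $L_{k^\pm}L^p$, we have $L_{k^\pm}T_zx=T_zL_{k^\pm}x$; so it is enough to bound, uniformly in $z$, the ``diagonal'' map $(y_k)_k\mapsto(T_zy_k)_k$ on the column subspace $\{(y_k):y_k\in L_{k^\pm}L^p\}$ of $L^p(B(\ell_2)\bar\otimes\mathcal L(\F_\infty))$, and on the corresponding row subspace.

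The engine for this is a free Cotlar-type identity. Summing the identities \eqref{ghid1}--\eqref{ghid} of Lemma \ref{prop} over reduced words should give, for $a,b$ in a suitable dense subalgebra, an identity of the form
\[
T_z(a)\,T_z^\circ(b)=T_z\bigl(a\,T_z^\circ(b)\bigr)+T_z^\circ\bigl(T_z(a)\,b\bigr)-T_z^\circ T_z(ab)+\mathcal E(a,b),
\]
where the error $\mathcal E(a,b)$ collects exactly the case-(i) contributions and is supported on words of block length at most one, hence is controlled by the elementary bounds $\|P_1T_z\|\le3$, $\|P_1T_z^\circ\|\le3$ of \eqref{piz} together with operator H\"older. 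Solving this identity for $T_z\bigl(a\,T_z^\circ(b)\bigr)$, using $\|T_z^\circ\|_{p\to p}=\|T_z\|_{p\to p}$, and writing a general $x$ as a norm-controlled sum $x=\sum_\alpha a_\alpha\,T_z^\circ(b_\alpha)$ adapted to the column/row decomposition (again furnished by the [MR17] Lemma), one reduces the desired estimate to an operator-valued Cauchy--Schwarz/H\"older control of the bilinear expression $(a,b)\mapsto T_z(a)\,T_z^\circ(b)$, in which one exploits that $T_z$ twists only the first syllable and $T_z^\circ$ only the last, so the two factors act on disjoint data. Arranged as a bootstrap/interpolation in the spirit of [MR17], this yields a finite $c_p$ of polynomial growth in $p$.

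The step I expect to be the main obstacle is precisely this last one: turning the heuristic ``$T_z$ sees only the first letter and $T_z^\circ$ only the last'' into a clean $L^p$ bound for the bilinear term, with constants independent of $z$ and polynomial in $p$, and carefully accounting for $\mathcal E$ and for the block-length-$\le1$ pieces throughout; the reductions in the first two paragraphs are routine once the [MR17] Lemma is available.
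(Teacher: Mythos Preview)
Your preliminary reductions are correct, and you have correctly identified the Cotlar-type identity coming from Lemma~\ref{prop} as the engine. The gap is in how you use it: the proposed plan of ``writing a general $x$ as a norm-controlled sum $x=\sum_\alpha a_\alpha\,T_z^\circ(b_\alpha)$'' and then bounding the bilinear form $(a,b)\mapsto T_z(a)T_z^\circ(b)$ at a fixed $p$ does not close. There is no evident decomposition of this kind with good norm control, and the bilinear identity does not by itself upgrade to an $L^p\to L^p$ bound for $T_z$ at the same $p$; the right side still involves $T_z$ and $T_z^\circ$ applied to products, so you are presupposing exactly what you want to prove.

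What makes the identity work in the paper is the specific choice $a=x$, $b=x^*$. Since $(T_zx)^*=T_z^\circ(x^*)$, the left side becomes $T_z(x)(T_zx)^*$, whose $L^p$-norm is $\|T_zx\|_{2p}^2$; on the right, the three ``main'' terms are $T_z$, $T_z^\circ$, $T_z^\circ T_z$ applied to elements of $L^p$ (products of two $L^{2p}$ factors), hence controlled by the constant $c_p$ already assumed on $L^p$, together with one factor $\|T_zx\|_{2p}$ via H\"older. The block-length~$\le1$ error is rewritten as $P_1T_z(|y-T_z^\circ y|^2)$ with $y=\sum_ke_{k1}\otimes x_k^*$, and Lemma~1 is invoked only here, to bound $\|y\|_{2p}\lesssim p^2\|x\|_{2p}$; it is not the main reduction. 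One obtains a quadratic inequality
\[
\|T_zx\|_{2p}^2\le 2c_p\|x\|_{2p}\|T_zx\|_{2p}+(c_p^2+Cp^4)\|x\|_{2p}^2,
\]
which solves to $c_{2p}\le c_p+\sqrt{2c_p^2+Cp^4}$. Induction from $p=2$ gives $c_p\lesssim p^2$ for $p=2^n$, and interpolation plus the duality you already noted handles all $1<p<\infty$. In short, the missing idea is the $p\to 2p$ squaring bootstrap; your bilinear/decomposition route does not lead there.
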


{\it  Proof.} Assume $\|T_z \|_{L^p(\hat \F_\infty)\rightarrow L^p(\hat \F_\infty)}\leq c_p$ for some $p=2^j,j\geq1$. For $ x=\sum_{g}c_g\la_g$,  denote by $x_k=L_{k^\pm}x$. Let $P_1$ be the projection onto the linear space  corresponding to reduced words with block length smaller or equals to $1$ in $L^2(\F_n)$. Lemma \ref{prop} implies that
\begin{eqnarray}
P_1^\bot[(T_z (x) T^\circ_z( x^*)]\nonumber
&=& P_1^\bot[T_z (xT_z ^\circ(x^*))+T_z ^\circ(T_z (x)x^*)-T^\circ_z T_z (xx^*)] \nonumber\\
P_1 [T_z  (x_k)T^\circ_z (x_j^*)]\nonumber
&=& P_1 [T_z (x_kT_z ^\circ(x_j^*))+T_z ^\circ(T_z (x_k )x^*_j)-T^\circ_z T_z (x_kx^*_j)] \nonumber
\end{eqnarray}
for $k\neq j$, and  
\begin{eqnarray}\label{P1T}
 P_1[T_z(x_k)T_z^\circ(x^*_k)]= P_1[T_z(x_kx^*_k)]=P_1[T_z^\circ(x_kx^*_k)]
\end{eqnarray}
Therefore,
\begin{eqnarray}
T_z (x)T^\circ_z (x^*)\nonumber
&=& [T_z (xT_z ^\circ(x^*))+T_z ^\circ(T_z (x)x^*)-T^\circ_z T_z (xx^*)]\nonumber\\
&-&P_1\sum_k[T_z(  x_kT_z^\circ x^*_k)+T^\circ_z( T_z( x_k)  x^*_k)-T^\circ_z T_z(  x_k  x^*_k)-T_z(  x_k x^*_k)].\nonumber
\end{eqnarray}
Denote by $y=\sum_k e_{k1}\otimes x_k^*$. Note $P_1T_z=P_1T_z^\circ$ and $$P_1T_z(x_kx^*_k)=P_1|T^\circ_z(x^*_k )|^2$$ because of (\ref{P1T}), we have 
\begin{eqnarray}
P_1 T_z(  | y-T^\circ_z y|^2)=P_1\sum_k[T^\circ_z T_z(  x_k  x^*_k)+T_z(  x_k x^*_k)-T_z(  x_kT_z^\circ( x^*_k))-T^\circ_z( T_z( x_k)  x^*_k)].\nonumber
\end{eqnarray}
Therefore,
\begin{eqnarray}\label{cotlar}
T_z (x)T^\circ_z (x^*)\nonumber
&=& [T_z (xT_z ^\circ(x^*))+T_z ^\circ(T_z (x)x^*)-T^\circ_z T_z (xx^*)]\nonumber\\ 
&&+P_1 T_z(  | y-T^\circ_z y|^2). 
\end{eqnarray}
By Lemma 1, we have $$\|y\|_{L^{2p}}=\|T_z y\|_{L^{2p}}\leq (2p)^2\|x\|_{L^{2p}}$$ for $p>1$. Taking $L^p$ norms on both sides of (\ref{cotlar}) and applying (\ref{piz}) and H\"older's inequality,    we get that
\begin{eqnarray*}
\| T_z x\|_{L^{2p}}^2&\leq&2 c_{p}\|x \|_{L^{2p}}\| T_z (x)\|_{L^{2p}}+  (c^2_{p}+192p^4)\|x \|^2_{L^{2p}}.
\end{eqnarray*}
Therefore, $$\|T_z (x)\|_{L^{2p}}\leq (c_p+\sqrt  {2c^2_{p}+192p^4})\| x\|_{L^{2p}}.$$
By induction, we have $$\|T_z x\|_{L^{p}}\leq  8p^2 \| x\|_{L^{p}}$$ for $p=2^n$.
 Applying the fact that $T_z T_{\bar z}=id, \|T_z x\|_{L^2 }\leq \|x\|_{ L^2} $, by interpolation and passing to the dual, we then get  the desired result for all $1<p<\infty$.\qed

\medskip\noindent
{\bf Remark.} Theorem \ref{key} fails for $p=1,\infty$, see the remark after Theorem \ref{main}.
  
  \section{Transference via the Translation Group}
  Given a bounded map $m$ from ${\Bbb Z}$ to ${\Bbb C}$. Let $M_m$ be the linear multiplier on $L^2(\hat{\Bbb F}_n)$  such that
 $$M_m(\la_h)=m(k_1)\la_h$$
 for $h=g_{i_1}^{k_1}g_{i_2}^{k_2}\cdots g_{i_n}^{k_n}$.

\begin{thm}
\label{main}
 For any $1<p<\infty$,    $M_m$ extends to a completely bounded linear operator on $L^p(\hat\F_n)$ iff the restriction of $M_m$ on $\F_1$, denoted by $\tilde M_m$, is  completely bounded Fourier multiplier on $L^p(\hat{\Bbb F}_1)=L^p({\Bbb T})$. Moreover,
 \begin{eqnarray*}
 \|M_m\|\leq c^2_p \|\tilde M_m\| 
 \end{eqnarray*}
 with $c_p$ the equivalence constant in Theorem \ref{key}.
  \end{thm}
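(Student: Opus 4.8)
The plan is to reduce the multiplier $M_m$ on $L^p(\hat\F_n)$ to the translation group $T_z$ of Theorem~\ref{key} via the Coifman--Weiss--Zygmund transference principle, exactly as announced in Section~2. First I would observe that $M_m$ and the group $(T_z)$ are related in the same way a one-dimensional Fourier multiplier is related to the rotation group on $\T$: if $z=(e^{it},e^{it},\dots,e^{it})$ is a \emph{scalar} rotation applied to all generators simultaneously, then $T_z(\la_h)=e^{itk_1}\la_h$ for $h=g_{i_1}^{k_1}\cdots g_{i_m}^{k_m}$, so $(T_{e^{it}})_{t\in\T}$ is a one-parameter group whose ``symbol'' on the block-length-one part is exactly the character $k_1\mapsto e^{itk_1}$. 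Consequently, for a trigonometric polynomial symbol $m(k)=\sum_j \hat m(j)e^{ijk}$ one has the averaging formula
\begin{eqnarray*}
M_m = \int_{\T} \widehat{\tilde M_m}(t)\, T_{e^{it}}\, dt
\end{eqnarray*}
in the sense that $M_m(\la_h)=\bigl(\int_\T \widehat{\tilde M_m}(t)e^{itk_1}dt\bigr)\la_h = m(k_1)\la_h$, where $\widehat{\tilde M_m}$ is the (operator-valued, if we are testing complete boundedness) kernel/Fourier transform of the associated multiplier $\tilde M_m$ on $L^p(\T)$. This is the representation-theoretic input that lets the transference machine run.

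Next I would make the transference rigorous on the matrix level to capture \emph{complete} boundedness. Tensor everything with $B(\ell_2)$ (or $S^p$), so that $\tilde M_m$ acting on $S^p(L^p(\T))$ has cb-norm $\|\tilde M_m\|_{cb}$, realized as convolution against an $S^p$-valued distribution $K$ on $\T$ with $\|K\ast\cdot\|_{S^p(L^p(\T))\to S^p(L^p(\T))}=\|\tilde M_m\|_{cb}$. Applying the Coifman--Weiss transference theorem to the uniformly bounded representation $t\mapsto T_{e^{it}}$ on $L^p(\hat\F_n)$ — uniformly bounded with constant $c_p$ by Theorem~\ref{key}, since $\|T_z\|_{L^p\to L^p}\le c_p$ for \emph{every} choice of unimodular $z$, in particular the scalar ones — yields
\begin{eqnarray*}
\Bigl\|\int_\T K(t)\,T_{e^{it}}\,dt\Bigr\|_{S^p(L^p(\hat\F_n))\to S^p(L^p(\hat\F_n))}\ \le\ c_p^2\,\|K\ast\cdot\|_{S^p(L^p(\T))\to S^p(L^p(\T))}\ =\ c_p^2\,\|\tilde M_m\|_{cb},
\end{eqnarray*}
the two factors of $c_p$ coming from the left and right uses of the uniform bound in the standard transference estimate. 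Since the left-hand operator is $\mathrm{id}_{S^p}\otimes M_m$, this is precisely $\|M_m\|_{cb}\le c_p^2\|\tilde M_m\|_{cb}$. The converse inequality $\|\tilde M_m\|_{cb}\le\|M_m\|_{cb}$ is immediate: $\F_1\hookrightarrow\F_\infty$ induces a trace-preserving embedding $L^p(\hat\F_1)\hookrightarrow L^p(\hat\F_n)$ with contractive conditional expectation onto it, and $M_m$ restricts to $\tilde M_m$ on this subalgebra, so restriction/corestriction gives the bound.

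The main obstacle I expect is not the abstract transference but the bookkeeping needed to legitimately run it: one must check that $T_{e^{it}}$ is a genuine \emph{strongly continuous} one-parameter group on $L^p(\hat\F_n)$ (continuity is easy on trigonometric polynomials, which are dense), that the operator-valued kernel version of Coifman--Weiss applies in the noncommutative $L^p$ setting (this is by now standard — it only uses that $L^p(\hat\F_n)$ is a UMD, indeed reflexive, space for $1<p<\infty$, and that the relevant integrals converge in the vector-valued sense), and that the symbol of $M_m$ really only sees the first block-exponent $k_1$, so that it factors through the character group exactly as the one-dimensional multiplier does. One subtle point worth spelling out is the treatment of the trivial word $\la_e$: $T_z(\la_e)=\la_e$ for all $z$, so $\int_\T K(t)dt$ acts on $\la_e$ as the constant $\int_\T K(t)dt=\widehat{\tilde M_m}(0)$, which matches $m(0)\la_e$ as required, consistently with the convention $M_m(\la_e)=m(0)\la_e$ hidden in the definition (taking $h=e$ with ``$k_1=0$''). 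Once these routine verifications are in place, the theorem follows, and the promised Mikhlin-type corollary is obtained by feeding into it the classical fact that a symbol $m$ on $\Z$ satisfying the Mikhlin condition gives a completely bounded multiplier $\tilde M_m$ on $L^p(\T)$.
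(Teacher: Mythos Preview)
Your proposal is correct and follows essentially the same route as the paper: both identify $T_{e^{it}}$ (the scalar case $z=(e^{it},\dots,e^{it})$ of Theorem~\ref{key}) as a uniformly bounded one-parameter group with symbol $e^{itk_1}$, write $M_m$ as the transferred convolution $\int_\T \phi(t)\,T_{e^{it}}\,dt$ against the kernel of $\tilde M_m$, and invoke Coifman--Weiss--Zygmund to pick up two factors of $c_p$. Your write-up is in fact a bit more careful than the paper's about the complete-boundedness bookkeeping (tensoring with $S^p$) and about the trivial converse direction via the conditional expectation onto $L^p(\hat\F_1)$.
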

{\it Proof.}
By Theorem \ref{key}, we have $$T_t:\la_h\mapsto e^{itk_1}\la_h$$ for $h=g_{i_1}^{k_1}g_{i_2}^{k_2}\cdots g_{i_n}^{k_n}$
is a  uniformly bounded $c_0$-group of operators on $L^p$. The desired result follows by Coifman/Weiss/Zygmund's transference principle.
Assume $\tilde M_m$ extends to a  completely bounded Fourier multiplier on $ L^p({\Bbb T})$. By approximation,  we may assume that $m$ has a finite support $[-N,  N]$. Define the scalar valued function $\phi_N$ on the unit circle as $\phi_N=\sum_k m(k)e^{ik\theta}$. Then 
we have, for any $L^p(\hat\F_n)$-valued function $F$, $$\|F*\phi_N\|_{L^p({\Bbb T},L^p(\hat\F_n))}=\|\tilde M_m(F)\|_{L^p({\Bbb T},L^p(\hat\F_n))}\leq \|\tilde M_m\| \|F\|_{L^p({\Bbb T},L^p(\hat\F_n))}.$$
 For any $x\in L^p(\hat\F_n)$ with norm $1$, we have 
\begin{eqnarray*}
M_m(x)&=&\frac1{2\pi}\int_0^{2\pi} T_{-t}x\phi_N(t)dt\\
&=&\frac1{2\pi}\int_0^{2\pi} T_{-s}T_{s-t}x\phi_N(t)dt.
\end{eqnarray*}
 Let $F(t)=T_tx$. Then $\|F\|_{L^p([0,2\pi],L^p(\hat\F_n))}\leq c_p$ and
\begin{eqnarray*}
 \|M_m(x)\|^p_{ L^p(\hat\F_n)}&\leq c_p &\frac1{2\pi}\int_0^{2\pi}\|\frac1{2\pi}\int_0^{2\pi} T_{s-t}x\phi_N(t)dt\|^p_{L^p(\hat\F_n)}ds \\
&=& c_p\|F*\phi_N\|^p_{L^p({\Bbb T},L^p(\hat\F_n))} \\
&\leq& c_p\|\tilde M_m\|\|F\|_{L^p({\Bbb T},L^p(\hat\F_n))}\leq c_p^2\|\hat M_m\|.
 \end{eqnarray*}\qed
 
\noindent
{\bf Remark.} Theorem \ref{main} fails for $p=1,\infty$. Take $m(k)=\chi_{[-2,2]}(k)$, which is the symbol of a c.b multiplier on $L^\infty(\hat\F_1)$. Then the multiplier $M_m$ is the projection   onto the set $\{s; |k_1(s)|\leq 2\}$ of $L^\infty(\hat\F_\infty)$, which is certainly unbounded. To see this, let
$$x=\sum_{-N<k<N} c_k (\la_{g_1g_2^3})^kg_1$$ and note $$M_m(x)=\sum_{0\leq k<N}  c_k (\la_{g_1g_2^3})^kg_1.$$

Theorem \ref{key} fails for $p=1,\infty$ too, since  Theorem \ref{main} would follow from it.
 
 \medskip
 
\noindent Let  $A_0=\{0\}, A_k= [2^{k-1},2^k)$ for $k\in {\Bbb N}$ and $ A_k= -A_{-k}$ for $k\in -{\Bbb N}$. 

 \medskip
 \begin{corollary} Let $\chi_k$ be the characteristic function on $ A_k$ for $k\in {\Bbb Z}$. Then
\begin{eqnarray}
\|x\|_{L^p}\simeq \|(M_{\chi_k}(x))_k\|_{L^p(\ell_{cr}^2)}.
\end{eqnarray}
\end{corollary}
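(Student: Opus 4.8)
The plan is to deduce this Littlewood--Paley-type equivalence from the corresponding statement on $\T$, using Theorem \ref{main} as the transference engine together with the fact that the blocks $A_k$ give a classical Littlewood--Paley decomposition on the circle. First I would recall the classical fact: for $1<p<\infty$ and $f\in L^p(\T)$ (or more generally $L^p(\T;X)$ for a suitable UMD-valued setting, which we will in fact need), one has $\|f\|_p\simeq \|(\Delta_k f)_k\|_{L^p(\ell^2)}$ where $\Delta_k$ is the Fourier projection onto frequencies in $A_k$; this is the dyadic Littlewood--Paley inequality, provable via Khintchine's inequality and the Marcinkiewicz/Mikhlin multiplier theorem applied to the randomized multipliers $\sum_k \varepsilon_k \chi_{A_k}$, whose symbols satisfy the Mikhlin condition uniformly in the signs $\varepsilon_k$. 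In the noncommutative setting the target norm becomes the $L^p(\ell^2_{cr})$ norm because $L^p(\hat\F_\infty)$ is noncommutative, and the relevant randomized-multiplier estimates must be taken completely boundedly; here one uses that the operator-space Mikhlin theorem encoded in Theorem \ref{main} produces a \emph{completely} bounded bound $\|M_m\|\le c_p^2\|\tilde M_m\|$ with $c_p$ independent of the particular multiplier, so the bound for $\sum_k\varepsilon_k\chi_{A_k}$ is uniform in $\varepsilon$.

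Concretely, the steps are as follows. (1) Each $M_{\chi_k}$ is the free Fourier multiplier with symbol $\chi_k(k_1)$ depending only on the first exponent $k_1$, so Theorem \ref{main} applies: $M_{\chi_k}$ is c.b.\ on $L^p(\hat\F_\infty)$, and more to the point, any finite randomized combination $\sum_k\varepsilon_k M_{\chi_k}=M_{\sum_k\varepsilon_k\chi_k}$ has $\|M_{\sum_k\varepsilon_k\chi_k}\|_{cb}\le c_p^2\,\|\widetilde{M_{\sum_k\varepsilon_k\chi_k}}\|_{cb}$, and the right-hand side is dominated by a constant depending only on $p$ because $\sum_k\varepsilon_k\chi_{A_k}$ is a Mikhlin symbol on $\T$ with constant independent of $\varepsilon$ (and the corresponding circle multiplier is c.b.\ on $L^p(\T)$, hence on $L^p(\T;Y)$ for operator spaces $Y$, with the same constant, since Mikhlin multipliers act c.b.\ on vector-valued $L^p$). (2) Feed this into the standard Khintchine/Fubini argument: writing $x=\sum_k M_{\chi_k}x$ (a finite sum after truncation, with a density argument to pass to general $x$), the upper estimate $\|(M_{\chi_k}x)_k\|_{L^p(\ell^2_{cr})}\lesssim_p \|x\|_p$ follows by integrating $\|\sum_k\varepsilon_k M_{\chi_k}x\|_p\le C_p\|x\|_p$ over the signs and applying the noncommutative Khintchine inequality (which is exactly what produces the $\ell^2_{cr}$ norm on the square-function side). (3) For the reverse inequality, dualize: the telescoping/partition property $\sum_k M_{\chi_k}=\mathrm{id}$ (on the span of reduced words, since every nonzero $k_1$ lies in exactly one $A_k$ and $M_{\chi_0}$ catches $\la_e$ and the words with $k_1=0$ — one should double-check the $k_1=0$ bookkeeping, i.e.\ whether $A_0=\{0\}$ genuinely covers all first-exponent-zero contributions, which it does since the first exponent of a reduced word is never $0$, so $\chi_0$ only sees $\la_e$) lets one write $\tau(x^*y)=\sum_k\tau((M_{\chi_k}x)^*(M_{\chi_k}y))$ and bound this by $\|(M_{\chi_k}x)_k\|_{L^p(\ell^2_{cr})}\|(M_{\chi_k}y)_k\|_{L^{p'}(\ell^2_{cr})}$ via the $\ell^2_{cr}$--$\ell^2_{cr}$ duality, then apply the already-proved upper estimate at the exponent $p'$ to the factor involving $y$.

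The main obstacle I expect is step (1)'s \emph{complete} boundedness with uniform constant: Theorem \ref{main} as stated gives c.b.-ness of a single $M_m$ in terms of c.b.-ness of $\tilde M_m$ on $L^p(\T)$, so I need the classical input ``$\sum_k\varepsilon_k\chi_{A_k}$ is a Mikhlin symbol, hence a c.b.\ Fourier multiplier on $L^p(\T)$ with constant independent of $\varepsilon$'' — and, since the target $Y=S^p$ (or $L^p(\hat\F_\infty)$-coefficients) is infinite-dimensional, I need this in the operator-space/vector-valued form. This is standard (Mikhlin multipliers are c.b.\ on $L^p(\T;Y)$ for any operator space $Y$, or one can cite the square-function formulation directly), but it is the point where one must be careful that no constant secretly depends on the dimension or on $\varepsilon$. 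A secondary, purely bookkeeping, obstacle is making the $\ell^2_{cr}$ duality and the noncommutative Khintchine inequality interact cleanly in the duality step (3), since $L^p(\ell^2_{cr})$ is a genuine intersection/sum space for $p\ne 2$ and its dual is the sum/intersection at $p'$; this is routine given Pisier--Xu's noncommutative Khintchine inequalities but should be spelled out.
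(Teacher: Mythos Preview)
Your proposal is correct and follows essentially the same route as the paper: uniform (complete) boundedness of $M_{\sum_k\varepsilon_k\chi_k}$ via Theorem~\ref{main} together with the classical fact that $\sum_k\varepsilon_k\chi_{A_k}$ is a c.b.\ Mikhlin multiplier on $L^p(\T)$, then Khintchine for the upper bound, and the orthogonality identity $\langle x,y\rangle=\sum_k\langle M_{\chi_k}x,M_{\chi_k}y\rangle$ plus duality for the lower bound. The extra care you flag about uniform c.b.\ constants and the $\ell^2_{cr}$ duality is exactly what the paper's terse argument leaves implicit.
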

\begin {proof}  
Let $m =\sum_{k\in {\Bbb Z}} \varepsilon_k \chi_k$   with $\varepsilon_k=\pm$. Then $M_m$ is a completely bounded multiplier on $L^p({\Bbb T})$. The Khitchine inequality and Theorem above imply that \begin{eqnarray}
\|(M_{\psi_k}(x)\|_{L^p(\ell_{cr}^2)}\lesssim   \|x\|_{L^p}. 
\end{eqnarray}
The other direction of the inequality follows from the duality between $L^p$ spaces and the identity 
  \begin{eqnarray}
\langle x, y\rangle= \sum_k \langle M_{\chi_k}(x) ,M_{\chi_k}(y) \rangle.
\end{eqnarray}
\end{proof}

\begin{corollary} \label{Mik} Suppose $M$ is a Mikhlin multiplier in the sense that  
$$ \sup_{k\in {\Bbb Z}} \{|m(k)|, k|m(k)-m(k-1)|\}<C.$$  Then $M$ extends to a completely bounded linear operator on $L^p(\hat\F_n)$ for all $1<p<\infty$.
\end{corollary}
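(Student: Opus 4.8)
\medskip\noindent\textbf{Proof proposal.} The plan is to deduce the corollary from Theorem \ref{main} and then invoke the vector-valued Marcinkiewicz multiplier theorem on the circle. By Theorem \ref{main} it suffices to show that the restriction $\tilde M_m$ --- that is, the Fourier multiplier on $\T$ with symbol $m\colon\Z\to\C$, acting on $L^p(\hat\F_1)=L^p(\T)$ --- is completely bounded; then $\|M_m\|\le c_p^2\|\tilde M_m\|$ finishes the argument. Unravelling the definition of the completely bounded norm of a map on the operator space $L^p(\T)$, this is the same as asking that $\tilde M_m\otimes\mathrm{id}_{S^p_n}$ be bounded on $L^p(\T;S^p_n)$ with a bound independent of $n$; equivalently, that $m$ be a bounded Fourier multiplier symbol for the vector-valued space $L^p(\T;X)$ with $X=S^p_n$, the bound depending only on $p$ and the UMD constant of $X$.

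First I would check that the Mikhlin hypothesis implies the dyadic (Marcinkiewicz) variation condition. For $k\ge 1$ and $j\in A_k=[2^{k-1},2^k)$ one has $j\ge 2^{k-1}$, while $A_k$ contains exactly $2^{k-1}$ integers; hence
$$\sum_{j\in A_k}|m(j)-m(j-1)|\le\sum_{j\in A_k}\frac{C}{j}\le\frac{C\cdot 2^{k-1}}{2^{k-1}}=C,$$
and the same bound holds on each $A_{-k}$ by symmetry. Together with $\sup_k|m(k)|<C$, these are precisely the hypotheses of the periodic Marcinkiewicz multiplier theorem, with all constants controlled by $C$.

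Next I would apply the vector-valued form of that theorem. For $1<p<\infty$ the Schatten class $S^p_n$ is a $1$-complemented subspace of $S^p$, which is UMD; hence $S^p_n$ is UMD with a constant bounded independently of $n$ (and depending only on $p$). Bourgain's method then makes the Marcinkiewicz/Mikhlin multiplier theorem valid for $S^p_n$-valued functions on $\T$, giving
$$\bigl\|\tilde M_m\otimes\mathrm{id}_{S^p_n}\bigr\|_{L^p(\T;S^p_n)\to L^p(\T;S^p_n)}\le C_p$$
uniformly in $n$, i.e.\ $\|\tilde M_m\|_{cb}\le C_p$. Combined with Theorem \ref{main} this yields $\|M_m\|_{cb}\le c_p^2 C_p<\infty$, which is the assertion. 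A transference variant works just as well: extend $m$ from $\Z$ to a Mikhlin symbol on $\R$, say by piecewise-linear interpolation, apply Bourgain's one-dimensional multiplier theorem on $L^p(\R;S^p_n)$, and transfer back to $\T$ by a de Leeuw-type argument.

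Given Theorem \ref{main}, there is essentially no obstacle here; the only care needed is to quote the correct vector-valued statement and the standard fact that $S^p$ is UMD with constant depending only on $p$. If instead one wants a proof self-contained within the present paper, one would build on the square-function equivalence of the preceding corollary: since all Fourier multipliers on $\F_n$ commute, $M_{\chi_k}M_m=M_{m\chi_{A_k}}$, whence $\|M_m x\|_p\simeq\|(M_{m\chi_{A_k}}M_{\chi_k}x)_k\|_{L^p(\ell^2_{cr})}$; one then bounds, uniformly in $k$, the completely bounded norm on $L^p(\T)$ of the single-block multiplier $M_{m\chi_{A_k}}$ by $\sup_{A_k}|m|+\mathrm{Var}_{A_k}(m)\lesssim C$ --- via summation by parts, which exhibits it as a combination, of total weight $\lesssim C$, of interval multipliers $M_{\chi_{[a,b]}}$, these being uniformly completely bounded on $L^p(\T)$ as modulations of the M. Riesz projection --- and finally sums the dyadic blocks back up inside $L^p(\ell^2_{cr})$. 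The delicate point of this second route is precisely that last step: passing from the per-block estimate to the $\ell^2$-square function without losing the gain. Conceptually, though, all the genuinely noncommutative content already lies in Theorem \ref{main}, the remainder being classical one-variable harmonic analysis on $\T$.
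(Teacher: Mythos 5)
Your proposal is correct and follows essentially the same route as the paper, whose proof is the one-line observation that the statement follows from Theorem \ref{main} together with the classical Mikhlin multiplier theorem on the circle. Your write-up merely fills in the details the paper leaves implicit --- the dyadic variation check and the fact that complete boundedness of $\tilde M_m$ on $L^p(\T)$ comes from the vector-valued (UMD/Bourgain) form of the theorem applied to $S^p_n$-valued functions --- which is exactly what the paper's appeal to the "classical" theorem amounts to in the cb setting.
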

\begin{proof} This follows from Theorem \ref{main} and the classical Mikhlin multiplier theorem.
\end{proof}

\begin{corollary} Let $\psi$ be a $C^2$ function supported on $[\frac12,\frac32]$ and $\psi(t)=1$ for $t\in [\frac23,\frac43]$. Let $\psi_k(t)=\psi(\frac t{2^k})$. Then
\begin{eqnarray}
\|x\|_{L^p}\simeq \|(M_{\psi_k}(x))_k\|_{L^p(\ell_{cr}^2)}.
\end{eqnarray}
\end{corollary}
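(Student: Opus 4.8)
The plan is to deduce both inequalities from the noncommutative Khinchine inequality, so that each reduces to a norm bound for a single completely bounded Fourier multiplier on $L^p(\hat\F_n)$, which, via Corollary \ref{Mik}, one obtains by verifying the discrete Mikhlin condition $\sup_k\{|m(k)|,\,|k|\,|m(k)-m(k-1)|\}<\infty$.

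For the upper estimate I would fix a sign sequence $\varepsilon=(\varepsilon_l)_l$ and consider $m_\varepsilon=\sum_l\varepsilon_l\psi_l$. As $\psi\in C^2$ is supported in $[\tfrac12,\tfrac32]$, the $\psi_l$ have bounded overlap and, on $\mathrm{supp}\,\psi_l$, $|\psi_l'(\xi)|\le\|\psi'\|_\infty\,2^{-l}\lesssim|\xi|^{-1}$; hence $|m_\varepsilon(k)|\lesssim1$ and, by the mean value theorem, $|k|\,|m_\varepsilon(k)-m_\varepsilon(k-1)|\lesssim 1$, uniformly in $k$ and in $\varepsilon$. Thus $M_{m_\varepsilon}$ is completely bounded on $L^p(\hat\F_n)$ with norm independent of $\varepsilon$, and since $\sum_l\varepsilon_lM_{\psi_l}x=M_{m_\varepsilon}x$, averaging over $\varepsilon$ and applying the noncommutative Khinchine inequality gives
$$\|(M_{\psi_k}x)_k\|_{L^p(\ell^2_{cr})}\;\simeq_p\;\Big(\mathbb E_\varepsilon\|M_{m_\varepsilon}x\|_p^p\Big)^{1/p}\;\lesssim\;\|x\|_p\qquad(1<p<\infty).$$

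For the lower estimate the device I would use is a reproducing formula. Because $\psi\equiv1$ on $[\tfrac23,\tfrac43]$ and the dilates $[\tfrac23 2^l,\tfrac43 2^l]$ cover $(0,\infty)$, for every $\xi>0$ some $\psi_l(\xi)=1$, whence $1\le\sum_l\psi_l(\xi)^2\le C$ there; set $g(\xi)=(\sum_l\psi_l(\xi)^2)^{-1}$ for $\xi>0$ and $g(\xi)=0$ otherwise. The same kind of estimate as above shows $g$ is bounded with $|g'(\xi)|\lesssim|\xi|^{-1}$ on $(0,\infty)$, so $g$ meets the discrete Mikhlin condition and $M_g$ is completely bounded on every $L^q(\hat\F_n)$. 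By construction $\sum_l\psi_l(\xi)^2g(\xi)=\mathbf{1}_{(0,\infty)}(\xi)$, and, the $M_{\psi_l}$ and $M_g$ being commuting selfadjoint Fourier multipliers,
$$\sum_l M_{\psi_l}^*\,M_g\,M_{\psi_l}\;=\;M_{\mathbf{1}_{(0,\infty)}}\;=:\;P_+ .$$
Hence, with $\langle a,b\rangle:=\tau(b^*a)$, for $x\in L^p(\hat\F_n)$ and $y\in L^{p'}(\hat\F_n)$,
$$\langle P_+x,y\rangle=\sum_l\big\langle M_{\psi_l}x,\;M_{\psi_l}(M_gy)\big\rangle ,$$
and the noncommutative Cauchy--Schwarz inequality ($L^p(\ell^2_{cr})$--$L^{p'}(\ell^2_{cr})$ duality), combined with the upper estimate just proved applied to $M_gy$ at the exponent $p'$, yields
$$|\langle P_+x,y\rangle|\le\|(M_{\psi_l}x)_l\|_{L^p(\ell^2_{cr})}\,\|(M_{\psi_l}M_gy)_l\|_{L^{p'}(\ell^2_{cr})}\lesssim\|(M_{\psi_k}x)_k\|_{L^p(\ell^2_{cr})}\,\|M_g\|_{cb}\,\|y\|_{p'}\,.$$
Taking the supremum over $\|y\|_{p'}\le1$ gives $\|P_+x\|_p\lesssim\|(M_{\psi_k}x)_k\|_{L^p(\ell^2_{cr})}$.

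Finally I would account for the negative frequencies and the scalar component. The $*$-automorphism $\sigma$ of $\mathcal{L}(\F_n)$ determined by $\sigma(\la_{g_i})=\la_{g_i}^{-1}$ is trace preserving, hence isometric on every $L^q$, and conjugates the Fourier multiplier with symbol $m$ to that with symbol $m(-\,\cdot)$; rerunning the previous paragraph with the reflected bumps $\psi(-\,\cdot/2^l)$ — the natural reading of $\psi_k$ for $k<0$, matching the convention $A_k=-A_{-k}$ used for the $\chi_k$ — gives the corresponding bound for $P_-x=M_{\mathbf{1}_{(-\infty,0)}}x$. Since $\|x\|_p\le\|P_+x\|_p+\|P_-x\|_p+|\tau(x)|$, where the last term is harmless under the usual convention on the $\la_e$-coefficient, combining this with the upper estimate finishes the proof. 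The crux is the lower estimate; the reproducing formula $\sum_l M_{\psi_l}^*M_gM_{\psi_l}=P_+$ is exactly what lets one avoid comparing $(M_{\psi_k}x)_k$ coordinate by coordinate with a sharp dyadic resolution of $x$, and the only genuine computations left are the verifications that $g$ and the $m_\varepsilon$ satisfy the Mikhlin condition in spite of the failure of scale invariance at the frequency $0$.
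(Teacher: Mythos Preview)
Your argument is correct. The upper estimate is obtained exactly as in the paper, via the noncommutative Khintchine inequality together with Corollary~\ref{Mik}; the only cosmetic difference is that the paper splits the sum $\sum_l\varepsilon_l\psi_l$ into its odd- and even-indexed parts so as to have disjoint supports, whereas you keep the full sum and rely on the bounded overlap of the $\psi_l$ to check the Mikhlin condition directly.

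For the lower estimate the two routes genuinely diverge. The paper does not introduce any correction symbol $g$; instead it leans on the preceding corollary about the sharp dyadic cutoffs $\chi_k$ and the pairing identity
\[
\langle x,y\rangle=\sum_k\langle M_{\psi_k}x,\,M_{\chi_k}y\rangle,
\]
coming from $\psi_k\chi_k=\chi_k$. One application of $L^p(\ell^2_{cr})$--$L^{p'}(\ell^2_{cr})$ duality together with the already-established square-function bound for $(M_{\chi_k}y)_k$ then gives $\|x\|_p\lesssim\|(M_{\psi_k}x)_k\|_{L^p(\ell^2_{cr})}$ in one line. Your reproducing formula $\sum_l M_{\psi_l}^*M_gM_{\psi_l}=P_+$ with $g=(\sum_l\psi_l^2)^{-1}$ achieves the same thing but is self-contained: it does not invoke the $\chi_k$ corollary, at the price of having to verify that the auxiliary symbol $g$ is itself a Mikhlin multiplier and of treating $P_+$, $P_-$ and the scalar part separately. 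The paper's path is shorter because it recycles work already done; yours is more robust, since it does not require $\psi_k$ to be identically $1$ on the dyadic block $A_k$, and your handling of the negative frequencies (via the trace-preserving automorphism flipping the generators) is more explicit than what the paper writes out.
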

\begin{proof} Let $m_1=\sum_{k\ odd} \varepsilon_k \psi_k$ and $m_2=\sum_{k\ even} \varepsilon_k\psi_k$ with $\varepsilon_k=\pm$. The Khitchine inequality and  Corollary  \ref{Mik} imply that \begin{eqnarray}
\|(M_{\psi_k}(x)\|_{L^p(\ell_{cr}^2)}\lesssim   \|x\|_{L^p}.
\end{eqnarray}
The other direction of the inequality follows from the duality between $L^p$ spaces and the identity 
  \begin{eqnarray}
\langle x, y\rangle= \sum_k \langle M_{\psi_k}(x) ,M_{\chi_k}(y) \rangle.
\end{eqnarray}
\end{proof}

\begin{corollary} Let $1 < p < \infty$. Then the unbounded linear operator $L:\la_h\mapsto k_1(h)\la_h$ has a bounded $H^\infty$-functional calculus on $L^p(\hat\Gamma)$ of
any positive angle $\mu$  Moreover, we have that
\begin{eqnarray}
\|\Phi(L)\| \lesssim (\sin\mu)^{-1}\|\Phi\|_\infty,
\end{eqnarray}
for all $\Phi\in H^\infty(\Sigma_\mu)$.
\end{corollary}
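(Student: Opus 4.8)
The plan is to identify $\Phi(L)$ with a Fourier multiplier and then invoke the Mikhlin-type Corollary~\ref{Mik} in quantitative form. By Theorem~\ref{key} the family $T_t\colon\la_h\mapsto e^{itk_1(h)}\la_h$ is a uniformly bounded $c_0$-group on $L^p(\hat\F_\infty)$ whose generator is $iL$; from the Laplace-transform formula $(\zeta-L)^{-1}=i\int_0^{\pm\infty}e^{-i\zeta t}T_t\,dt$ one reads off $\|\zeta(\zeta-L)^{-1}\|\le c_p/\sin\omega$ for $\zeta$ outside the bisector of half-angle $\omega$ about $\R$, so $L$ is bisectorial of angle $0$ (with $\sigma(L)\subset\Z$, and $0$ an isolated eigenvalue whose spectral projection is the rank-one map $x\mapsto\tau(x)\la_e$). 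For $\Phi$ bounded and holomorphic on a bisector of half-angle $\mu$ about $\R$ — the natural home of the calculus since $\sigma(L)$ meets both half-lines — the residue computation $\frac1{2\pi i}\int_\Gamma\Phi(\zeta)(\zeta-k_1(h))^{-1}\,d\zeta=\Phi(k_1(h))$ for $h\ne e$ shows $\Phi(L)\la_h=\Phi(k_1(h))\la_h$, while on $\C\la_e$ one sets $\Phi(L)\la_e=\Phi(0)\la_e$ when $\Phi$ extends continuously to $0$ and $0$ otherwise. Hence $\Phi(L)=M_m$ with $m=\Phi|_\Z$, and it remains to bound $\|M_m\|$.

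Next I would estimate the Mikhlin seminorm of $m$. Clearly $|m(k)|=|\Phi(k)|\le\|\Phi\|_\infty$; and since $m(k)-m(k-1)=\int_{k-1}^k\Phi'(t)\,dt$ while for real $t$ the distance from $t$ to the boundary of the bisector is $|t|\sin\mu$, the Cauchy integral formula yields $|\Phi'(t)|\le\|\Phi\|_\infty/(|t|\sin\mu)$. For $|k|\ge2$ one has $t\ge|k|/2$ on $[k-1,k]$, whence $|k|\,|m(k)-m(k-1)|\lesssim(\sin\mu)^{-1}\|\Phi\|_\infty$, and the finitely many remaining values $k=0,\pm1$ are bounded by $2\|\Phi\|_\infty$ directly; thus $\sup_k\{|m(k)|,\,|k|\,|m(k)-m(k-1)|\}\lesssim(\sin\mu)^{-1}\|\Phi\|_\infty$. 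Feeding this into the quantitative form of Corollary~\ref{Mik} — whose proof combines Theorem~\ref{main} (contributing the factor $c_p^2$) with the classical Mikhlin multiplier theorem on $\T$ (whose norm is $\lesssim_p$ the Mikhlin constant) — gives $\|\Phi(L)\|=\|M_m\|\lesssim_p(\sin\mu)^{-1}\|\Phi\|_\infty$, first for $\Phi$ in the subalgebra of functions with polynomial decay at $0$ and $\infty$, for which the Cauchy-integral definition is absolutely convergent and the identification $\Phi(L)=M_m$ is literal, and then for all $\Phi\in H^\infty$ of the bisector by the standard convergence (uniform boundedness) lemma of the $H^\infty$-calculus. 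As $\mu>0$ is arbitrary, $L$ has a bounded $H^\infty$-calculus of every positive angle.

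The analytic heart, the Cauchy estimate, is routine; the points needing care are the bookkeeping at the vertex $0$ (equivalently, the $\la_e$-component), handled by splitting off the rank-one spectral projection $x\mapsto\tau(x)\la_e$, and the verification that the abstract bisectorial calculus of $L$ agrees on nice $\Phi$ with the multiplier $M_{\Phi|_\Z}$ so that Corollary~\ref{Mik} applies. I expect no deeper obstacle: the $p$-dependence (from Theorem~\ref{main} and classical Mikhlin) and the $\mu$-dependence (from the Cauchy estimate) decouple, yielding the stated bound with implied constant depending only on $p$.
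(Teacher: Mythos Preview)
Your proposal is correct and follows essentially the same route as the paper: identify $\Phi(L)$ with the multiplier $M_{\Phi|_{\Z}}$, use the Cauchy integral formula to bound $\sup_t\{|\Phi(t)|,\,|t\Phi'(t)|\}\lesssim(\sin\mu)^{-1}\|\Phi\|_\infty$, and then invoke Corollary~\ref{Mik}. The paper's own proof is much terser---it simply records the Cauchy estimate and cites Corollary~\ref{Mik}---whereas you additionally spell out the bisectoriality of $L$, the treatment of the eigenvalue at $0$, and the passage from nice $\Phi$ to general $H^\infty$ via the convergence lemma; this extra scaffolding is sound but not a different method.
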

\begin{proof} Applying Cauchy's integral formula, it is easy to check that 
$$ \sup_{t\in {\Bbb R}} \{|\Phi(t)|, t|\Phi'(t)|\}<C(\sin\mu)^{-1}\|\Phi\|_\infty $$ for   $\Phi\in H^\infty(\Sigma_\mu)$. The desired result follows from Corollary \ref{Mik}.
\end{proof}

\medskip
Recall that we say a subset $\Lambda$ of $\Bbb Z$ is a c. b. $\Lambda_p (p>2)$ set with constant $C_\Lambda$ if,  
for any operator valued sequence $x_k, k\in \Lambda$, we have the equivalence
\begin{eqnarray*}
\|\sum _{k\in \Lambda} x_k e^{ik\theta}\|\simeq^{ C_\Lambda}\|(x_k)_{k\in \Lambda}\|_{L^p(\ell_{cr}^2)}.
\end {eqnarray*}
This is equivalent to   that, for  any subset $A\subset \Lambda$,  the Fourier multiplier $$\tilde M_{\chi_A}: e^{ik\theta}\mapsto \chi_A(k)e^{ik\theta}$$ extends to a completely bounded map on $L^p(\hat {\Bbb Z})=L^p({\Bbb T})$ with a bound $\leq C_\Lambda$.


\begin{corollary} Suppose $\Lambda\subset\Bbb Z$ is a c. b. $\Lambda_p$ set. Then, for any $A\subset \Lambda $,  $M_{\chi_A}$ extends to a completely bounded map on $L^p(\hat\F_n)$ with a bound $\leq c_p^2C_\Lambda$.
\end{corollary}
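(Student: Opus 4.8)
The plan is to deduce this directly from Theorem \ref{main} together with the characterization of c.b.\ $\Lambda_p$ sets recalled just above the statement. First I would observe that the hypothesis ``$\Lambda$ is a c.b.\ $\Lambda_p$ set with constant $C_\Lambda$'' is, by the equivalent reformulation given in the excerpt, precisely the assertion that for every subset $A\subset\Lambda$ the Fourier multiplier $\tilde M_{\chi_A}\colon e^{ik\theta}\mapsto \chi_A(k)e^{ik\theta}$ on $L^p({\Bbb T})=L^p(\hat{\Bbb F}_1)$ is completely bounded with c.b.\ norm $\le C_\Lambda$. Since $A\subset\Lambda\subset{\Bbb Z}$, the function $\chi_A$ is a bounded scalar function on ${\Bbb Z}$, so the multiplier $M_{\chi_A}$ on $L^p(\hat\F_n)$ defined by $M_{\chi_A}(\la_h)=\chi_A(k_1)\la_h$ (for $h=g_{i_1}^{k_1}\cdots g_{i_n}^{k_n}$) falls exactly into the class covered by Theorem \ref{main}.

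The second step is simply to invoke Theorem \ref{main}: the restriction of $M_{\chi_A}$ to $\F_1$ is $\tilde M_{\chi_A}$, which we have just noted is completely bounded on $L^p({\Bbb T})$ with norm at most $C_\Lambda$; hence $M_{\chi_A}$ extends to a completely bounded operator on $L^p(\hat\F_n)$, and the quantitative bound in Theorem \ref{main} gives
\begin{eqnarray*}
\|M_{\chi_A}\|_{cb}\le c_p^2\,\|\tilde M_{\chi_A}\|_{cb}\le c_p^2\,C_\Lambda,
\end{eqnarray*}
which is exactly the claimed estimate. One small point worth spelling out is that Theorem \ref{main} is stated for a fixed $1<p<\infty$ and for general $n\in{\Bbb N}\cup\{\infty\}$, and the constant $c_p$ there is the equivalence constant from Theorem \ref{key}; since $\chi_A$ is genuinely a bounded symbol on ${\Bbb Z}$ this application is immediate and requires no approximation beyond what is already internal to the proof of Theorem \ref{main}.

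There is essentially no obstacle here: the corollary is a one-line consequence of Theorem \ref{main} once one unwinds the definition of a c.b.\ $\Lambda_p$ set. The only thing that needs mild care is the bookkeeping of complete boundedness constants, namely checking that the equivalence ``$\Lambda$ c.b.\ $\Lambda_p$ set'' $\Leftrightarrow$ ``$\tilde M_{\chi_A}$ c.b.\ for all $A\subset\Lambda$'' is used with matching constants, so that the final bound is genuinely $c_p^2 C_\Lambda$ rather than some larger universal multiple of it; this is routine, as the reformulation is stated with the same constant $C_\Lambda$ in the excerpt. Thus the proof is a direct citation of Theorem \ref{main} applied to the symbols $\chi_A$, $A\subset\Lambda$.
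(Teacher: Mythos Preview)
Your proposal is correct and follows exactly the same approach as the paper, which simply says ``This follows from Theorem \ref{main}.'' You have merely spelled out the details of that citation, using the equivalent characterization of c.b.\ $\Lambda_p$ sets to feed $\tilde M_{\chi_A}$ into Theorem \ref{main} and read off the bound $c_p^2 C_\Lambda$.
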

\begin{proof} This follows from Theorem \ref{main}.
\end{proof}
 \bibliographystyle{amsplain}

\bigskip
\hfill \noindent \textbf{Tao Mei} \\
\null \hfill Department of Mathematics
\\ \null \hfill Baylor University \\
\null \hfill One bear place, Waco, TX  USA \\
\null \hfill\texttt{tao\_mei@baylor.edu}

\bigskip
\hfill \noindent \textbf{Quanhua Xu} \\
\null \hfill Institute for Advanced Study in Mathematics\\
\null \hfill  Harbin Institute of Technology\\
\null \hfill  Harbin 150001 HEILO\\
\null \hfill PEOPLES REPUBLIC OF CHINA\\

\null \hfill Laboratoire de Math\'ematiques
\\ \null \hfill Univ. de Franch Comt\'e \\
\null \hfill 25030, Besan\c on, France \\
\null \hfill\texttt{quanhua.xu@univ-fcomte.fr}

\end{document}